\documentclass[a4paper,12pt,reqno]{amsart}
%Spacing
\addtolength{\textwidth}{3 truecm}
\addtolength{\textheight}{1 truecm}
\setlength{\headheight}{14 pt}
\setlength{\voffset}{-1.3 truecm}
\setlength{\hoffset}{-1.5 truecm}
\setlength{\parskip}{0.2 cm}
\setlength{\marginparwidth}{2cm}
\setlength{\parindent}{0.7 cm}
\setlength{\headsep}{0.7 cm}
\setlength{\footskip}{0.9 cm}
\linespread{1.3}

\usepackage{amsmath, amsfonts, amssymb}
\usepackage[hang,flushmargin]{footmisc} 
\usepackage{amsthm}
\usepackage{enumitem}
\usepackage{comment}
\usepackage{hyperref}
\usepackage[dvistyle, colorinlistoftodos]{todonotes}

\newtheorem{theorem}{Theorem}[section]
\newtheorem{lemma}[theorem]{Lemma}

\newtheorem{proposition}[theorem]{Proposition}

\newtheorem{example}[theorem]{Example}
\theoremstyle{definition}
\newtheorem{defn}[theorem]{Definition}

\def\Z{\mathbb{Z}}
\def\R{\mathbb{R}}

\def\P{\mathbb{P}}

\newcommand{\Zmod}[1]{\Z_{#1}} % \Z/{#1}\Z or \Z_{#1} -- this is so that one can easily change from Z_p to Z/pZ depending on the journal's style

% Modify \left and \right not to have extra spacing
\let\originalleft\left
\let\originalright\right
\renewcommand{\left}{\mathopen{}\mathclose\bgroup\originalleft}
\renewcommand{\right}{\aftergroup\egroup\originalright}

\begin{document}

\title{On the dimension of additive sets}

\author{P. Candela}
\address{D\'epartement de math\'ematiques et applications\newline
	\indent \'Ecole normale sup\'erieure, 
	Paris,
	France}
\email{pablo.candela@ens.fr}
\author{H. A. Helfgott}
\address{D\'epartement de math\'ematiques et applications\newline
	\indent \'Ecole normale sup\'erieure, 
	Paris,
	France}
	\email{helfgott@dma.ens.fr}
\thanks{Research supported by project ANR-12-BS01-0011 CAESAR and by a postdoctoral grant of the \'Ecole normale sup\'erieure, Paris.}
\subjclass[2010]{Primary 11B30; Secondary  05D40}
\keywords{Additive dimension, dissociated sets}
\maketitle

\begin{abstract}
We study the relations between several notions of dimension for an additive set, some of which are well-known and some of which are more recent, appearing for instance in work of Schoen and Shkredov. We obtain bounds for the ratios between these dimensions by improving an inequality of Lev and Yuster, and we show that these bounds are asymptotically sharp, using in particular the existence of large dissociated subsets of $\{0,1\}^n\subset \Z^n$.
\end{abstract}

\section{Introduction}
Let $A$ be an additive set, that is, a finite subset of an abelian group. A \emph{subset sum} of $A$ is a sum of the form $\sum_{a\in A'} a$ for some set $A'\subset A$. By a \emph{$[-1,1]$-combination of} $A$, we mean a sum $\sum_{a\in A} \varepsilon_a\, a$ with coefficients $\varepsilon_a$ lying in $ [-1,1]=\{-1,0,1\}$. 

\begin{defn}
A subset $D$ of an abelian group is said to be \emph{dissociated} if the subset sums of $D$ are pairwise distinct; equivalently, the only $[-1,1]$-combination of $D$ that equals 0 is the one with all coefficients equal to 0. We say that $D$ is a \emph{maximal} dissociated subset of $A$ if there is no dissociated set $D'\subset A$ such that $D'\supsetneq D$.
\end{defn}

Dissociativity plays an important role in additive combinatorics and harmonic analysis; see \cite{TomNotes} and \cite[\S 4.5]{T-V}. In particular, it provides an analogue, in the setting of general abelian groups, of the concept of linear independence from linear algebra, and it is often used to define a notion of dimension for an additive set. For a recent instance, in the work of Schoen  and Shkredov \cite{SS} the terminology `additive dimension of $A$' is used for the maximal cardinality of a dissociated subset of $A$. We shall call this quantity the dissociativity dimension.

\begin{defn}
Let $A$ be an additive set. We define the \emph{dissociativity dimension} of $A$ to be the number 
$d_d(A):=\max \{|D|: D\subset A,\; D\textrm{ is dissociated}\}$. We say that $D$ is a \emph{maximum} dissociated subset of $A$ if  $|D|=d_d(A)$. We also define the \emph{lower dissociativity dimension} of $A$ to be the number 
$d_d^-(A):=\min \{|D|: D\subset A \textrm{ is maximal dissociated}\}$.
\end{defn}

The variant $d_d^-(A)$ is considered less often than $d_d(A)$ in the literature; it appears for instance in \cite[Section 8]{SS}, where it is denoted by $\tilde d(A)$.

In linear algebra, the concepts of linear independence and dimension are linked to that of a linear-span. The well-known  basic result is that in a vector space the maximum cardinality of a linearly independent set, if finite, is equal to the minimum cardinality of a spanning set, the resulting number being by definition the dimension of the space. In the more general context of additive sets, there is an analogue of the linear span, related to dissociativity. We define it and give a corresponding notion of dimension, as follows.

\begin{defn}
Given a subset $S$ of an abelian group $G$, the 1\emph{-span} of $S$,  denoted $\langle S \rangle$, is the set of all $[-1,1]$-combinations of $S$. Given a subset $A\subset G$, we shall call a set $S\subset G$ satisfying $\langle S\rangle \supset A$ a \emph{1-spanning set} for $A$. We define the  \emph{1-span dimension} of an additive set $A$ to be the number 
$d_s(A):=\min \{|S|: S\subset A,\; \langle S\rangle \supset A\}$.
\end{defn}

This quantity has also been considered in \cite[Section 8]{SS}, where it is denoted $d(A)$. A variant of this notion, which can be called the \emph{lower 1-span dimension} of $A$, is the number $d_s^-(A):=\min \{|S|: S\subset G,\; \langle S\rangle \supset A\}$; here $G$ is the ambient abelian group containing $A$ and the sets $S$ are allowed to have elements in $G\setminus A$. This variant also appears in \cite{SS}, where it is denoted $d_*(A)$. It had already appeared in previous works, notably as the number denoted $\ell(A)$ in \cite{Sch}.

Given the basic result from linear algebra recalled above, it is natural to compare the numbers $d_d(A),d_d^-(A)$ with $d_s(A),d_s^-(A)$. It follows promptly from the definitions that if $D$ is a maximal dissociated subset of $A$ then $\langle D\rangle \supset A$. We then  deduce that
\[
d_s^-(A)\leq d_s(A)\leq d_d^-(A)\leq d_d(A).
\]
In contrast to the linear-algebra setting, each of these inequalities can be a strict one. In this paper we study the extent to which these quantities can differ from each other.

Our first result is the following lower bound on the ratio $d_s^-(A) / d_d(A)$.

\begin{theorem}\label{thm:main}
Let $A$ be an additive set. Then we have
\begin{equation}\label{eq:mainineq}
\frac{d_s^-(A)}{d_d(A)}\;\geq \; \frac{1}{\log_4 d_d(A)}\; \big(1+o(1)_{d_d(A)\to \infty}\big).
\end{equation}
\end{theorem}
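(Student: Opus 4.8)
The plan is to reduce the statement to a purely combinatorial inequality about integer subset sums of $\{-1,0,1\}$-vectors, and then to extract the constant $\log_4$ by an entropy computation. Write $d=d_d(A)$ and $s=d_s^-(A)$, and fix a dissociated subset $D=\{a_1,\dots,a_d\}\subset A$ with $|D|=d$ together with a $1$-spanning set $S=\{x_1,\dots,x_s\}\subset G$ with $|S|=s$ and $\langle S\rangle\supset A$. Each $a_i$ is a $[-1,1]$-combination $a_i=\sum_{j=1}^s \varepsilon_{ij}x_j$ with $\varepsilon_{ij}\in\{-1,0,1\}$; record these as vectors $v_i=(\varepsilon_{i1},\dots,\varepsilon_{is})\in\{-1,0,1\}^s$. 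First I would observe that the dissociativity of $D$ forces the $2^d$ integer vectors $\sum_{i\in A'}v_i$ (over $A'\subset\{1,\dots,d\}$) to be pairwise distinct: if two of them coincided, then substituting back through $\sum_{i\in A'}a_i=\sum_j\big(\sum_{i\in A'}\varepsilon_{ij}\big)x_j$ would give two equal but distinct subset sums of $D$, contradicting dissociativity. Thus it suffices to bound $s$ from below given $d$ vectors in $\{-1,0,1\}^s$ whose $2^d$ subset sums are distinct; the ambient group $G$ has now disappeared from the problem.

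Next I would phrase this counting via Shannon entropy (taken throughout in bits, i.e.\ base $2$). Let $b_1,\dots,b_d$ be independent fair coin flips and set $\sigma=\sum_i b_i v_i\in\Z^s$, the subset sum indexed by the random set $\{i:b_i=1\}$. Since the $2^d$ subset sums are distinct, $\sigma$ is uniform on $2^d$ values and $H(\sigma)=d$. Writing $\sigma=(\sigma_1,\dots,\sigma_s)$ with $\sigma_j=\sum_i b_i\varepsilon_{ij}$, subadditivity of entropy gives $d=H(\sigma)\le\sum_{j=1}^s H(\sigma_j)$, so it remains to bound each $H(\sigma_j)$.

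The crux, and the step producing the base $4$ rather than base $2$, is the estimate of $H(\sigma_j)$. The naive bound $H(\sigma_j)\le\log_2(\#\text{values})\le\log_2(d+1)$ only yields $s\ge d\log 2/\log d\,(1+o(1))$, which is the claimed inequality weakened by a factor of $2$; recovering this factor is the main obstacle. The key point is that $\sigma_j$ is a sum of $m_j$ independent terms $b_i\varepsilon_{ij}$, each of variance $\tfrac14$, where $m_j=\#\{i:\varepsilon_{ij}\neq0\}\le d$, so that $\Var(\sigma_j)=m_j/4\le d/4$ and $\sigma_j$ concentrates on a window of width $\Theta(\sqrt d)$ rather than spreading over all of its roughly $d$ possible values. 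I would make this precise with the maximum-entropy bound for integer-valued random variables, $H(X)\le\tfrac12\log_2\!\big(2\pi e(\Var(X)+\tfrac1{12})\big)$, obtained by comparing $X+U$, with $U$ uniform on $[-\tfrac12,\tfrac12]$ independent of $X$, to a Gaussian of the same variance (noting that the differential entropy of $X+U$ equals $H(X)$). This gives $H(\sigma_j)\le\tfrac12\log_2 d+O(1)$.

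Combining the pieces, $d\le s\big(\tfrac12\log_2 d+O(1)\big)$, hence $s\ge d/\big(\tfrac12\log_2 d\big)\,(1+o(1))=d/\log_4 d\,(1+o(1))$, and dividing by $d=d_d(A)$ yields exactly \eqref{eq:mainineq}. The only quantitative input beyond elementary probability is the variance--entropy inequality above, and since every estimate is uniform in the data, the argument is robust as $d=d_d(A)\to\infty$, so the error term is $o(1)_{d_d(A)\to\infty}$ as stated.
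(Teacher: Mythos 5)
Your proof is correct, and although it begins with the same reduction as the paper's Proposition~\ref{lem:dslb} --- encode each element of the dissociated set $D$ as a $\{-1,0,1\}$-coefficient vector over the spanning set $S$, and use dissociativity to conclude that the $2^{|D|}$ subset-sum vectors in $\Z^{|S|}$ are pairwise distinct --- the key step is genuinely different. The paper argues by concentration plus counting: Hoeffding's inequality and a union bound show that for at least $(1-r^{-1})2^{|D|}$ of the vectors $\lambda\in\{0,1\}^{|D|}$, every coordinate $(C\lambda)_i$ lies in an interval of width $O\big(\sqrt{|D|\log(r|S|)}\big)$, and comparing $2^{|D|}$ with the number of lattice points in the resulting box yields \eqref{eq:dslb}. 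You instead run an information-theoretic argument: $H(\sigma)=|D|$ by injectivity and uniformity, subadditivity gives $H(\sigma)\le\sum_j H(\sigma_j)$, and the maximum-entropy (Massey-type) bound $H(X)\le\tfrac12\log_2\big(2\pi e(\Var(X)+\tfrac1{12})\big)$ for integer-valued $X$, applied with $\Var(\sigma_j)\le |D|/4$, gives $H(\sigma_j)\le\log_4|D|+O(1)$. Both proofs extract the crucial factor $\tfrac12$ (base $4$ rather than base $2$) from the same phenomenon, namely that each coordinate of the subset sum has standard deviation $O(\sqrt{|D|})$ rather than range $\Theta(|D|)$, but the implementations trade different things. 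Your route needs no auxiliary parameter $r$ and no union bound over the $|S|$ coordinates, and consequently your intermediate inequality $|D|\le |S|\big(\log_4|D|+O(1)\big)$ carries no $\log_2\log(4|S|)$ term, so it is slightly stronger than \eqref{eq:dslb} as a standalone statement; for Theorem~\ref{thm:main} itself this makes no difference, since $d_s^-(A)\le d_d(A)$ renders that term $O(\log\log d_d(A))=o(\log d_d(A))$ in any case. The paper's route, conversely, is more elementary --- only Hoeffding's inequality and pigeonhole counting, with every constant explicit --- and makes the geometric picture (most subset sums trapped in a small box) directly visible.
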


We deduce this from an inequality relating the size of an arbitrary 1-spanning set for $A$ to the size of an arbitrary dissociated subset of $A$; see Proposition \ref{lem:dslb}. This inequality can be viewed as a refinement of an inequality of Lev and Yuster, namely inequality $(*)$ in \cite[Proof of Theorem 2]{LY}.

It is then natural to wonder whether there exist  additive  sets for which the ratio $d_s^-/d_d$ reaches the lower bound given by \eqref{eq:mainineq}, and more precisely whether each of the ratios of consecutive dimensions, i.e. $d_s^-/d_s,d_s/d_d^-,d_d^-/d_d$ can reach this lower bound.

For each positive integer $n$, let $Q_n$ denote the discrete cube $\{0,1\}^n$ viewed as an additive set in $\Z^n$. It follows from known results that $d_d(Q_n)=n\log_4 n\;(1+o(1))$ as $n\to \infty$. This was established independently by Lindstr\"om \cite{Lind} and by Cantor and Mills  \cite{C&M}; the result is related to the \emph{coin weighing problem}, and similar results have been treated in other works (for a recent treatment, providing several references, see \cite{Bs}).

Let $D_n$ be a dissociated subset of $Q_n$ of cardinality $|D_n|=d_d(Q_n)$. Since the standard basis is itself a maximal dissociated subset of $Q_n$ of minimum size $n$, the set $Q_n$ shows that the ratio $d_d^-(A)/d_d(A)$ can be as small as $1/\log_4 d_d(A)$ asymptotically as $d_d(A)\to\infty$. Hence the lower bound in \eqref{eq:mainineq} is asymptotically sharp.  Moreover, this set $D_n$ itself is an example showing that $d_s^-(A)/d_s(A)$ can also be as small as $1/\log_4 d_d(A)$, since for $D_n$ we have $d_s^-(D_n)=n$ yet $d_s(D_n)=|D_n|=d_d(D_n)$ (as $D_n$ is dissociated). Our second result completes the picture by showing that the remaining ratio $d_s(A)/d_d^-(A)$ can also be this small.

\begin{theorem}\label{thm:midratio}
For each positive integer $n$ there exists a set $A_n\subset \{0,1,2\}^n$ satisfying $d_d(A_n)= n\log_4 n\, (1+o(1)_{n\to \infty})$ and such that
\begin{equation}\label{eq:midratio}
\frac{d_s(A_n)}{d_d^-(A_n)}\leq \frac{1}{\log_4 d_d(A_n)}\big(1+o(1)_{n\to\infty}\big).
\end{equation}
\end{theorem}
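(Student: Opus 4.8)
The plan is to realise $A_n$ as a disjoint union $A_n = D \cup H$, where $D$ is a large dissociated set responsible for the high dissociativity dimension and $H$ is a small, \emph{deliberately non-dissociated} set that makes $1$-spanning cheap. For $D$ I would begin from a maximum dissociated subset $D_0$ of the cube $Q_n$, of size $m := d_d(Q_n) = n\log_4 n\,(1+o(1))$, furnished by the Lindström and Cantor--Mills construction. The whole purpose of the third symbol $2$ is to push $D$ slightly \emph{outside} the cube: I would modify $D_0$ into a dissociated set $D \subset \{0,1,2\}^n$, still of size $(1+o(1))m$, whose elements carry entries equal to $2$, the point being that a $2$ cannot be produced by a $[-1,1]$-combination of $\{0,1\}$-vectors. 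Simultaneously I would build $H \subset \{0,1,2\}^n$ with $|H| \le (1+o(1))n$ so that $\langle H\rangle \supset D$ (hence $H$ is a $1$-spanning set for $A_n$), while \emph{no dissociated subset of $H$} $1$-spans $D$; the obstruction to a dissociated sub-cover is exactly that reaching the $2$-entries of $D$ forces one to use a genuine relation inside $H$. The toy model $D=\{(2,1),(1,2)\}$, $H=\{(1,0),(0,1),(1,1)\}$ in $\Z^2$, in which $H$ $1$-spans $D$ but no two of its elements do, is the mechanism I would scale up.

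Granting such a pair $(D,H)$, three of the four required estimates are routine. Since $H \subset A_n$ and $\langle H\rangle \supset A_n$, we get $d_s(A_n) \le |H| \le (1+o(1))n$. Next, $D \subset A_n$ gives $d_d(A_n) \ge |D| = (1+o(1))m$, while any dissociated subset of $A_n$ splits into a dissociated subset of $D$ (bounded by the coin-weighing upper bound in $\{0,1,2\}^n$, which I would check still has main term $n\log_4 n$, the larger entries only affecting lower-order terms) and a subset of $H$ of size $\le |H| = o(m)$; hence $d_d(A_n) = (1+o(1))m$ and $\log_4 d_d(A_n) = (1+o(1))\log_4 n$. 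Thus \eqref{eq:midratio} reduces to the single inequality $d_d^-(A_n) \ge (1-o(1))m$, that is, to showing that \emph{every} maximal dissociated subset of $A_n$ is nearly as large as the maximum one.

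For this last point I would use an injective-map count. Let $D'$ be a maximal dissociated subset of $A_n$ and set $P = D'\cap D$, $Q = D'\cap H$; maximality forces $D \subset \langle D'\rangle$, so each $v \in D\setminus P$ admits an expression $v = \sum_{p\in P}\alpha_p\,p + \sum_{q\in Q}\beta_q\,q$ with coefficients in $\{-1,0,1\}$. Consider the assignment
\[
\Phi\colon v \longmapsto v - \sum_{p\in P}\alpha_p\,p \;=\; \sum_{q\in Q}\beta_q\,q \;\in\; \langle Q\rangle .
\]
If $\Phi$ is injective on $D\setminus P$ then $|D\setminus P| \le |\langle Q\rangle| \le 3^{|Q|}$, whence $|D'| \ge |P| = m - |D\setminus P| \ge m - 3^{|Q|}$; provided $|Q| \le (1-\varepsilon)\log_3 m$ this yields $|D'|\ge(1-o(1))m$, and combining with the previous paragraph gives $d_s(A_n)/d_d^-(A_n)\le (1+o(1))/\log_4 d_d(A_n)$, as desired.

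I expect the main obstacle to lie entirely in making the previous paragraph rigorous, and there are two intertwined difficulties. First, $\Phi$ need \emph{not} be injective: a coincidence $\Phi(v)=\Phi(v')$ produces a vanishing relation $v - v' = \sum_{p\in P}(\alpha_p-\alpha'_p)p$ among elements of $D$ with coefficients in $\{-2,\dots,2\}$, which plain dissociativity does not forbid; and one cannot simply demand that $D$ have no such relation, since a set avoiding all $\{-2,\dots,2\}$-relations has size only $(1+o(1))n\log_5 n < (1+o(1))n\log_4 n$, which would degrade the ratio to $1/\log_5 n$ and destroy the bound. Reconciling the required $\log_4$-size of $D$ with enough rigidity to make $\Phi$ (boundedly) injective is the crux. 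Second, since $D$ spans $\R^n$ up to $o(n)$ dimensions, so does $H$, so $H$ contains dissociated subsets $Q$ of size $\asymp n \gg \log_3 m$, for which the estimate $3^{|Q|}$ is vacuous; the construction must guarantee that such large dissociated $Q$, lacking the defining relation of $H$, are \emph{useless} for covering the $2$-structure of $D$—quantitatively, that the images $\Phi(v)$ are confined to a subset of $\langle Q\rangle$ of size $m^{o(1)}$. It is this transversality between the $2$-entries of $D$ and the dissociated subspans of $H$, together with the verification that lifting $D_0$ out of the cube preserves dissociativity and the $\log_4$ constant, that I regard as the heart of the proof; the remaining bookkeeping is elementary.
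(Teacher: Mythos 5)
Your construction is essentially the paper's own: the small non-dissociated spanning set can be taken to be $H = B_n \cup \{s_n\}$ (the standard basis of $\R^n$ together with the all-ones vector $s_n = x_1+\cdots+x_n$), and the large dissociated part can be taken to be $2\cdot D_0 \subset \{0,2\}^n$, where $D_0$ is a maximum dissociated subset of $\{0,1\}^n$ of size $m = n\log_4 n\,(1+o(1))$; scaling by $2$ preserves dissociativity, so no delicate ``lifting'' of $D_0$ out of the cube is required. The problem is the step you yourself flag as the crux: your proof of $d_d^-(A_n)\geq (1-o(1))m$ via the map $\Phi$ is not a proof. Both obstructions you name are real and you resolve neither: $\Phi$ can fail to be injective, and excluding collisions by forbidding $\{-2,\dots,2\}$-relations in $D$ would cap $|D|$ at roughly $n\log_5 n$, destroying the constant; and $H$ necessarily contains dissociated subsets $Q$ of size comparable to $n$, for which the bound $|\langle Q\rangle|\leq 3^{|Q|}$ says nothing. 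As written, your argument establishes only the easy inequalities ($d_s(A_n)\leq n+1$ and $d_d(A_n)\geq m$), so there is a genuine gap.

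The gap is closed not by counting but by rigidity: with $H = B_n\cup\{s_n\}$ one can characterize \emph{exactly} the maximal dissociated subsets of $A_n = B_n\cup\{s_n\}\cup(2\cdot D_0)$. Any maximal dissociated $S\subset A_n$ omits some element of $H$ (since $H$ carries the relation $s_n = x_1+\cdots+x_n$), and, being maximal dissociated, $S$ is automatically $1$-spanning for $A_n$. If $s_n\notin S$, then reducing modulo $2$ shows that an element $y\in 2\cdot D_0$ cannot be a $[-1,1]$-combination of $B_n\cup(2\cdot D_0\setminus\{y\})$: the basis coefficients must vanish mod $2$, hence vanish, and then dissociativity of $2\cdot D_0$ forbids the rest; similarly each $x_i$ must lie in $S$. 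So $S = A_n\setminus\{s_n\}$, which is dissociated of size exactly $n+|D_0|$. If instead some $x_j\notin S$, a parallel forcing argument (to $1$-span $x_j$ the set $S$ must contain $s_n$ and every $x_i$ with $i\neq j$, and any combination producing an element of $2\cdot D_0$ that involves $s_n$ is stuck with an uncancellable $x_j$) forces $S = A_n\setminus\{x_j\}$, again of size $n+|D_0|$ or a contradiction. Hence $d_d^-(A_n)=d_d(A_n)=n+|D_0|$ \emph{exactly}, with no approximate count, no transversality condition, and no modification of $D_0$ beyond the scaling by $2$. The moral is that the mod-$2$ obstruction you identified (``a $2$ cannot be produced by a $[-1,1]$-combination of $\{0,1\}$-vectors'') already does all the work once $H$ is chosen to have exactly one relation; the route through images in $\langle Q\rangle$ is both unnecessary and, as your own analysis suggests, probably unrepairable.
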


Theorems \ref{thm:main} and \ref{thm:midratio} are  proved in Section \ref{section:main}.

In Section \ref{section:[N]} we consider sets of integers to examine whether, for at least some nice family of subsets of $\Z$, we have that for every set $A$ in the family the dissociativity dimensions $d_d(A)$, $d_d^-(A)$ lie closer to the spanning dimensions $d_s(A),d_s^-(A)$ than is guaranteed by \eqref{eq:mainineq}. The family of  intervals $[N]=\{1,2,\ldots,N\}$ is a natural one to consider; let us recall for instance (see \cite[p. 59]{Erdos-Graham}) that it is one of the oldest problems of Erd\H os to prove that $d_d([N])=\log_2 N + O(1)$. We do not pursue that problem here, but we prove the following.

\begin{theorem}\label{thm:[N]}
For any positive integer $N$ we have
\[
d_s([N])=d_d^-([N])= \lfloor \log_3 N\rfloor + \big\lceil \log_3 2N -\lfloor \log_3 N\rfloor \big\rceil.
\]
\end{theorem}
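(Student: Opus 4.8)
My plan is to prove that both $d_s([N])$ and $d_d^-([N])$ equal the single integer
$$k := \min\{m \in \N : 3^m \geq 2N+1\} = \lceil \log_3(2N+1)\rceil,$$
and then to reconcile $k$ with the closed form in the statement. Since $\lfloor \log_3 N\rfloor \in \Z$, it may be pulled out of the ceiling, so the claimed value equals $\lceil \log_3 2N\rceil$; and because $2N$ is even it is never a power of $3$, whence $3^m \geq 2N \iff 3^m \geq 2N+1$ and therefore $\lceil \log_3 2N\rceil = k$. Thus it suffices to establish $d_s([N]) = d_d^-([N]) = k$.

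Given the chain $d_s^-(A)\le d_s(A)\le d_d^-(A)$ recorded in the introduction, the equality will follow by a squeeze once I prove the lower bound $d_s^-([N]) \geq k$ and the upper bound $d_d^-([N]) \leq k$. For the lower bound I would argue by counting: if $S \subset \Z$ is any $1$-spanning set for $[N]$, then $\langle S\rangle$ is symmetric about $0$ and contains $0$, so it contains all of $\{-N,\dots,-1,0,1,\dots,N\}$; as $|\langle S\rangle| \le 3^{|S|}$, this forces $3^{|S|} \geq 2N+1$, i.e. $|S| \geq k$.

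For the upper bound I would exhibit a maximal dissociated subset of $[N]$ of size exactly $k$. Write $R := (3^{k-1}-1)/2$, so that the balanced-ternary set $\{3^0, 3^1, \dots, 3^{k-2}\}$ is dissociated with $1$-span precisely the interval $\{-R,\dots,R\}$. I would then set $D := \{3^0,\dots,3^{k-2}\}\cup\{d\}$ with $d := \min(N, 3^{k-1})$ and check the required properties. Adjoining $d$ preserves dissociativity exactly when $d > R$; and $\langle D\rangle = \{-R,\dots,R\}\cup(\pm d + \{-R,\dots,R\})$ covers the whole of $\{1,\dots,N\}$ provided $d \le 2R+1 = 3^{k-1}$ (so that no gap opens between the two positive blocks) and $d \ge N - R$ (so that the top block reaches $N$). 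All of $R+1 \le d \le 3^{k-1}$ and $d \ge N-R$, together with $d \le N$ and $d$ being distinct from the smaller powers, reduce to the two numerical facts $3^{k-1}\le 2N$ and $N \le (3^k-1)/2$, which are exactly the defining inequalities $3^{k-1} < 2N+1 \le 3^k$ of $k$. Since $\langle D\rangle \supseteq [N]$ and $D$ is dissociated, $D$ is a maximal dissociated subset of $[N]$ of cardinality $k$, giving $d_d^-([N]) \le k$ and completing the squeeze.

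The main obstacle is the construction in the upper bound, specifically the regime $3^{k-1} > N$, where the plain balanced-ternary set $\{3^0,\dots,3^{k-1}\}$ no longer lies inside $[N]$. Replacing its largest element by $d = N$ forces two competing constraints — \emph{dissociativity}, which wants $d$ large ($d > R$), and \emph{spanning without a gap}, which confines $d$ to a bounded window around $N$ — to hold simultaneously; verifying that this window is nonempty and contains an admissible $d \le N$ is the delicate point, and it is precisely here that the arithmetic of the bounds $3^{k-1} \le 2N \le 3^k - 1$ is used.
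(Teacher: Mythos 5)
Your proof is correct, and its skeleton is the same as the paper's: a counting lower bound $|S|\geq\log_3(2N+1)$ for any $1$-spanning set, an explicit dissociated $1$-spanning subset of $[N]$ built from powers of $3$, and the principle that a dissociated set which $1$-spans $[N]$ is automatically a \emph{maximal} dissociated subset of $[N]$. (The paper isolates this last principle as Lemma \ref{lem:dissospan}; you invoke it without proof, so add the one-line argument: if $a\in [N]\setminus D$ then $a\in\langle D\rangle$ gives a nontrivial vanishing $[-1,1]$-combination inside $D\cup\{a\}$.) The differences are in the packaging, and each version buys something. The paper splits into two cases according to whether $\{\log_3 N\}$ is smaller or larger than $1-\log_3 2$ (Propositions \ref{prop:case1} and \ref{prop:case2}); its witness is $S_1=\{1,3,\ldots,3^{\lfloor\log_3 N\rfloor}\}$ in the first case, while in the second it adjoins $t=\frac{3^{\lfloor\log_3 N\rfloor+1}+1}{2}=R+1$, the smallest integer exceeding the span radius, so that the $1$-span remains a full symmetric interval. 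You instead work uniformly with $k=\lceil\log_3 2N\rceil$ and adjoin $d=\min(N,3^{k-1})$: this coincides with the paper's $S_1$ in the first regime, and in the second regime uses $N$ itself rather than $R+1$; both choices lie in the admissible window $R+1\leq d\leq \min(N,2R+1)$, and your reduction of all the constraints (dissociativity, gap-free spanning, $d\leq N$, $d>3^{k-2}$ so that $|D|=k$) to the two defining inequalities $3^{k-1}\leq 2N\leq 3^k-1$ is sound, as is the parity observation that $2N$ is never a power of $3$, which both reconciles the closed form and makes the paper's two cases exhaustive. Your route also yields two small extras: the squeeze through $d_s^-\leq d_s\leq d_d^-$ pins down $d_s^-([N])$ as well, and your symmetry-based count applies to spanning sets allowed to lie outside $[N]$, whereas the paper states its positivity-based count only for subsets of $[N]$.
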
 
In the final section we briefly describe a relation between the dimension $d_s$ and a result of Schoen on maximal densities of subsets of $\Zmod{p}$ avoiding solutions to a linear equation with integer coefficients.

\section{On general additive sets: Theorems \ref{thm:main} and \ref{thm:midratio} }\label{section:main}

Given an additive set $A$, a 1-spanning set $S\subset A$ has size bounded below trivially by $\log_3(|A|)$, since  $|\{-1,0,1\}^{S}|\geq |A|$. The argument leading to inequality $(*)$ in \cite[Proof of Theorem 2]{LY} is easily adapted to yield the following lower bound for $|S|$: we have $|S|\geq |D| / \log_2(2|D|+1)$ for every dissociated set $D\subset A$. This lower bound can be strengthened as follows.

\begin{proposition}\label{lem:dslb}
Let $A$ be a finite subset of an abelian group $G$, let $D\subset A$ be dissociated, and let $S\subset G$ be a 1-spanning set for $A$. Then 
\begin{equation}\label{eq:dslb}
\frac{|D|}{\log_4|D|} \leq  |S| \left(1+ \frac{4+\log_2 \log 4|S|}{\log_2 |D|}\right).
\end{equation}
\end{proposition}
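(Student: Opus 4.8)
The plan is to recast the statement as a lower bound for a coin-weighing type problem and then extract the factor-of-two improvement over the naive Lev--Yuster count by exploiting concentration of measure. Since $S$ is a $1$-spanning set for $A\supseteq D$, each $d\in D$ admits a representation $d=\sum_{s\in S}M_{ds}\,s$ with coefficients $M_{ds}\in\{-1,0,1\}$; record these as a $\{-1,0,1\}$-valued matrix $M=(M_{ds})_{d\in D,\,s\in S}$. The first step is the key injectivity observation: the map $\phi\colon\{0,1\}^D\to\Z^S$ sending $x\mapsto\big(\sum_d x_d M_{ds}\big)_{s\in S}$ is injective. Indeed, if $\phi(x)=\phi(y)$ then $\sum_d(x_d-y_d)d=\sum_s\phi(x)_s\,s-\sum_s\phi(y)_s\,s=0$ in $G$, and since $x_d-y_d\in\{-1,0,1\}$ and $D$ is dissociated, all differences vanish. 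Thus the image of $\phi$ has exactly $2^{|D|}$ elements. The naive count $2^{|D|}\le\prod_s(c_s+1)$, where $c_s:=\#\{d:M_{ds}\neq0\}\le|D|$, reproduces a bound of Lev--Yuster type, namely $|D|\le|S|\log_2(|D|+1)$, but loses the crucial factor of $2$.

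To recover that factor I would argue probabilistically. Let $X=(X_d)_{d\in D}$ be uniform on $\{0,1\}^D$ and set $R_s:=\sum_d X_d M_{ds}$, so that $\E R_s=\tfrac12\sum_d M_{ds}=:\mu_s$ and $\Var(R_s)=\tfrac14 c_s$. The point is that each $R_s$ is a sum of $c_s$ independent variables each supported on an interval of length $1$, hence concentrated in a window of width $\Theta(\sqrt{c_s})$ rather than spread over its full range of size $\Theta(c_s)$; this is exactly what halves the effective per-coordinate contribution. Concretely, by Hoeffding's inequality $\P(|R_s-\mu_s|>T_s)\le 2\exp(-2T_s^2/c_s)$, and I would choose the threshold $T_s:=\sqrt{\tfrac{c_s}{2}\log(4|S|)}$ so that each tail has probability at most $1/(2|S|)$. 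A union bound over the $|S|$ coordinates then gives that the good event $G=\{\,|R_s-\mu_s|\le T_s\text{ for all }s\,\}$ has probability at least $1/2$.

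Combining this with injectivity yields the heart of the estimate. Since $\phi$ is injective and $X$ is uniform, at least $\tfrac12\cdot 2^{|D|}=2^{|D|-1}$ points $x$ lie in $G$ and map to distinct integer vectors, all contained in the box $\prod_s[\mu_s-T_s,\mu_s+T_s]$; hence $2^{|D|-1}\le\prod_s(2T_s+1)$. Taking $\log_2$ and using $2T_s+1\le 2\sqrt{2c_s\log(4|S|)}$ (valid once $2T_s\ge1$, which holds for $|S|\ge2$) together with $c_s\le|D|$ gives, per coordinate, $\log_2(2T_s+1)\le\tfrac32+\tfrac12\log_2|D|+\tfrac12\log_2\log(4|S|)$. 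Summing over the $|S|$ coordinates and multiplying by $2$ produces $2|D|\le|S|\big(\log_2|D|+\log_2\log(4|S|)\big)+3|S|+2$, and absorbing $3|S|+2\le 4|S|$ yields exactly $2|D|\le|S|\big(\log_2|D|+4+\log_2\log(4|S|)\big)$; dividing by $\log_2|D|$ and using $\log_4|D|=\tfrac12\log_2|D|$ gives the claimed inequality.

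I expect the main obstacle to be purely in the bookkeeping of constants: the factor-of-two gain is conceptually the whole game, but one must choose $T_s$ so that the union-bound threshold $\log(4|S|)$ matches the $\log_2\log(4|S|)$ appearing in the target, and then verify that the accumulated lower-order terms (the loss of one bit from $\P(G)\ge\tfrac12$, the rounding $2T_s+1$, and the constant $\tfrac32$ per coordinate) really do fit under the stated constant $4$. A cleaner entropy-variance inequality would replace $\log_2\log(4|S|)$ by an absolute constant, but the elementary concentration-and-union-bound route is what the stated form reflects. The small-case regimes $|S|=1$ and $|D|\le1$ sit outside the concentration argument and should be dispatched by hand.
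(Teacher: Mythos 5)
Your proposal is correct and follows essentially the same route as the paper's own proof: represent each element of $D$ as a $\{-1,0,1\}$-combination of $S$, use dissociativity for injectivity of $\lambda\mapsto C\lambda$, apply Hoeffding plus a union bound (the paper's choice $r=2$ matches your $\P(G)\geq 1/2$), and count integer points in the resulting box of per-coordinate width $O\big(\sqrt{|D|\log|S|}\big)$. Your bookkeeping (including the $2T_s+1$ rounding and the absorption $3|S|+2\leq 4|S|$) lands exactly on the stated constant, and your handling of $c_s\leq|D|$ is in fact slightly more careful than the paper's direct use of $|D|^{1/2}$.
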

Theorem \ref{thm:main} follows from this, since $d_s^-(A)\leq d_d(A)$.
\begin{proof}
Let $m=|S|, n=|D|$, and let us fix a labelling of the elements of $S$ and $D$, thus
$S=\{s_1,s_2,\ldots, s_m\}$ and $D=\{d_1,d_2,\ldots,d_n\}$. Since $\langle S\rangle \supset A\supset D$, for each $j\in [n]$ we can fix a choice of a vector $(c_{i,j})_{i\in [m]}\in \{-1,0,1\}^m$ such that $d_j= \sum_{i\in [m]} c_{i,j} s_i$. Let $C$ be the $m\times n$ matrix with $(i,j)$ entry $c_{i,j}$.

The subset sums of $D$ are the combinations $\sum_{j=1}^{n} \lambda_j d_j$ with $\lambda=(\lambda_j)\in \{0,1\}^n$. We have
\begin{equation}\label{eq:main1}
\forall\, \lambda\in \{0,1\}^n,\qquad \sum_{j\in [n]} \lambda_jd_j = \sum_{i\in [m]} \Big(\sum_{j\in [n]} c_{i,j}\, \lambda_j \Big) s_i
=  \sum_{i\in [m]}  (C\lambda)_i\,s_i .
\end{equation}

We shall prove that, for some intervals of integers $\Lambda_1,\Lambda_2,\ldots, \Lambda_m$, each of width $O\Big(\sqrt{|D|\log |S|}\Big)$, for a large proportion of the elements $\lambda\in \{0,1\}^n$ we have $(C\lambda)_i \in \Lambda_i$ for every $i\in [m]$. To this end, fix any $i\in [m]$, and let us consider the terms $\lambda_1 c_{i,1},\ldots, \lambda_n c_{i,n}$ as independent random variables, the $j$th one taking value $c_{i,j}$ with probability $1/2$ and value 0 otherwise, for each $j\in  [n]$. (Note that we are thus using the uniform probability on $\{0,1\}^n$.) Then letting $\mu_i=\frac{1}{2}\sum_{j\in [n]} c_{i,j}$, by Hoeffding's inequality \cite[Chapter 3, Theorem 1.3]{Gut} we have
\[
\forall\, t>0,\qquad \P\left(\Big|\mu_i-\sum_{j\in [n]} \lambda_j\, c_{i,j} \Big| > t\Big(\sum_{j\in [n]} c_{i,j}^2\Big)^{1/2}\right) \leq 2 \exp \left(-2t^2 \right).
\]
Since $\Big(\sum_{j\in [n]} c_{i,j}^2\Big)^{1/2}\leq |D|^{1/2}$, letting $t= \sqrt{ \log( 2 r |S|)/2}$, for $r>0$, we deduce that
\[
\P\left(\Big|\mu_i - \sum_j \lambda_j\, c_{i,j} \Big| >  |D|^{1/2} \sqrt{ \log( 2 r |S|)/2}\right) \leq (r|S|)^{-1} .
\]
By the union bound, the probability that the latter event holds for some $i\in [m]$ is thus at most $r^{-1}$. Hence
\begin{equation}\label{eq:main3}
\P\Big(\Big|\mu_i - (C \lambda)_i \Big|\leq \sqrt{ |D| \log( 2 r |S|)/2}\,\,\textrm{ for all }i\in [m]\Big) \geq 1-r^{-1} .
\end{equation}
 Now let $\Lambda_i=\Big[\mu_i -\sqrt{  |D| \log( 2 r |S|)/2},\mu_i +\sqrt{  |D| \log( 2 r |S|)/2}\Big]$. Combining \eqref{eq:main1} and \eqref{eq:main3}, we obtain that for at least $(1-r^{-1})2^n$ values of $\lambda \in \{0,1\}^n$, the subset sum $\sum_{j\in [n]} \lambda_j d_j$ is an integer linear combination of the elements $s_1,\ldots, s_m$, with $i$th coefficient $(C\lambda)_i\in \Lambda_i$ for each $i\in [m]$. Since these subset sums are pairwise distinct (by dissociativity of $D$), we conclude that
\[
\big(1-r^{-1}\big)\, 2^{|D|} \leq \prod_{j\in [m]} |\Lambda_j |\leq \big(2|D|\log(2r |S|)\big)^{|S|/2}.
\]
Choosing $r=2$, taking $\log_2$ of both sides and rearranging, we obtain \eqref{eq:dslb}.
\end{proof}

We now turn to comparing $d_s$ and $d_d^-$, towards Theorem \ref{thm:midratio}.

We shall call a subset $S$ of an additive set $A$ satisfying $\langle S\rangle \supset A$ and $|S| = d_s(A)$ a \emph{minimum 1-spanning subset of} $A$.

The following small example shows that the dimensions $d_s$ and $d_d^-$ can indeed differ.

\begin{example}\label{lem:eg1}
Let $\{x_1,x_2\}$ be the standard basis in $\R^2$, and let
\[
A=\{x_1, x_2, x_1+x_2, 2x_1, 2x_2\}.
\]
This set has \textup{(}unique\textup{)} minimum 1-spanning subset $\{x_1,x_2,x_1+x_2\}$, while any maximal dissociated subset of $A$ has size $4$. 
\end{example}

The claims in this example are easily checked by inspection. In fact, this example is the simplest case of the following general construction, which is our  main ingredient in our proof of Theorem \ref{thm:midratio}.

\begin{proposition}\label{lem:geneg}
Let $B_n=\{x_1, x_2, \ldots, x_n\}$ be the standard basis of $\R^n$, let $s_n=\sum_{i\in [n]} x_i$, and let $D$ be a dissociated non-empty subset of $\{0,1\}^n$. Then the set
\[
A_n= B_n\cup\{s_n\}\cup (2\cdot D)
\]
satisfies $d_s(A_n)= n+1$ and $d_d^-(A_n)= d_d(A_n) = n+|D|$.
\end{proposition}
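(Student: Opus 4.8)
The plan is to handle the spanning dimension $d_s$ and the dissociativity dimensions $d_d, d_d^-$ separately, using reduction modulo $2$ as the common engine. Write $\pi\colon \Z^n\to \F_2^n$ for coordinatewise reduction mod $2$, so that $\pi(x_i)=e_i$, $\pi(s_n)=(1,\dots,1)$, and $\pi(2d)=0$ for every $d\in D$. Since a coefficient in $\{-1,0,1\}$ and its residue mod $2$ range over $\F_2$ independently, for any finite $T\subset \Z^n$ we have $\pi(\langle T\rangle)=\Span_{\F_2}\pi(T)$; this is the observation I will lean on throughout. Throughout I take $n\ge 2$ (the degenerate case $n=1$, where $s_1=x_1$, is checked directly).

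For $d_s(A_n)=n+1$ the upper bound is the explicit set $S=B_n\cup\{s_n\}$: it lies in $A_n$, has size $n+1$, and $1$-spans $A_n$ because $2d = s_n + \sum_{i\in\supp(d)} x_i - \sum_{i\notin\supp(d)} x_i \in \langle S\rangle$ for each $d\in D$. For the lower bound I would suppose $S\subset A_n$ is a $1$-spanning set with $|S|\le n$. As $\langle S\rangle\supset B_n$ spans $\R^n$, so does $S$, forcing $|S|=n$; reducing mod $2$ gives $\Span_{\F_2}\pi(S)=\F_2^n$, and since $\pi$ kills every element of $2\cdot D$, no such element can lie in $S$, so $S$ consists of $n$ elements of $B_n\cup\{s_n\}$ whose images form an $\F_2$-basis. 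Taking any $d_0\in D$ (nonempty, and $\neq 0$ since $0\notin D$ for dissociated $D$) and writing $2d_0=\sum_{s\in S}\varepsilon_s s$, reduction mod $2$ together with $\F_2$-independence of $\pi(S)$ forces each $\varepsilon_s$ to be even, hence $0$, whence $2d_0=0$, a contradiction. This gives $d_s(A_n)=n+1$.

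For the dissociativity dimensions it suffices to prove the structural claim that every maximal dissociated subset $T\subset A_n$ has $|T|=n+|D|$: this yields $d_d^-(A_n)=n+|D|$ immediately, and since every dissociated subset of the finite set $A_n$ extends to a maximal one it also yields $d_d(A_n)=n+|D|$. Given such a $T$, I split $T=T_1\sqcup T_2$ with $T_1=T\cap(B_n\cup\{s_n\})$ and $T_2=T\cap(2\cdot D)$, and recall (as noted before the proposition) that maximality gives $\langle T\rangle\supset A_n$. Reducing mod $2$ shows $\Span_{\F_2}\pi(T_1)=\F_2^n$, so $|T_1|\ge n$; conversely, the only non-dissociated subset of $B_n\cup\{s_n\}$ is the whole set, since its sole (up to sign) vanishing $[-1,1]$-combination is $s_n-\sum_i x_i=0$, which involves all $n+1$ elements. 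Hence the dissociated set $T_1$ satisfies $|T_1|\le n$, so $|T_1|=n$ and $\pi(T_1)$ is an $\F_2$-basis.

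It then remains to show $T_2=2\cdot D$. Fixing $d_0\in D$, maximality gives $2d_0\in\langle T\rangle$, say $2d_0=\sum_{t\in T_1}\varepsilon_t t+\sum_{2d\in T_2}\eta_d(2d)$. Reducing mod $2$ and using that $\pi(T_1)$ is an $\F_2$-basis while $\pi(2d)=0$, every $\varepsilon_t$ is even hence zero; dividing the identity $2d_0=2\sum_d\eta_d d$ by $2$ gives $d_0=\sum_{d:\,2d\in T_2}\eta_d d$. If $2d_0\notin T_2$, this is a nontrivial $[-1,1]$-combination of elements of $D$ equal to $0$ (the coefficient of $d_0$ being $1$), contradicting dissociativity of $D$; thus $2d_0\in T_2$, so $T_2=2\cdot D$ and $|T|=n+|D|$. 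I expect this last step to be the main obstacle: it is where passage to $\F_2$ eliminates the $T_1$-coefficients and then the dissociativity of $D$ is invoked to force $T\supset 2\cdot D$. The same mod-$2$ mechanism is precisely what powers the nontrivial lower bound $d_s(A_n)\ge n+1$, so it is the conceptual heart of the whole proposition.
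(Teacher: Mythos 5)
Your proof is correct, and while it shares the paper's underlying engine (parity modulo $2$), it is organized along a genuinely different route. The paper argues by case analysis on which element of $B_n\cup\{s_n\}$ a $1$-spanning (or maximal dissociated) subset omits: if $s_n$ is missing, element-by-element parity observations force the set to contain all of $A_n\setminus\{s_n\}$; if some $x_j$ is missing, they force it to equal $A_n\setminus\{x_j\}$. You instead package all the parity reasoning once into the homomorphism $\pi:\Z^n\to\F_2^n$ and the identity $\pi(\langle T\rangle)=\Span_{\F_2}\pi(T)$, and then run $\F_2$-dimension counts: a $1$-spanning $S\subset A_n$ of size at most $n$ would have to map onto an $\F_2$-basis avoiding $\pi(2\cdot D)=\{0\}$, after which $2d_0$ cannot be spanned; and for a maximal dissociated $T$, the piece $T\cap(B_n\cup\{s_n\})$ is squeezed to size exactly $n$ (rank at least $n$ since $T$ is $1$-spanning, at most $n$ since $B_n\cup\{s_n\}$ itself is not dissociated), whereupon dissociativity of $D$ forces $2\cdot D\subseteq T$. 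The trade-off: the paper's case analysis yields an explicit classification of the extremal sets (minimum spanning set $B_n\cup\{s_n\}$; maximal dissociated sets $A_n\setminus\{s_n\}$, and possibly $A_n\setminus\{x_j\}$), whereas your argument never needs to exhibit or check dissociativity of any candidate maximal set --- you only bound the size of an arbitrary one --- which avoids the paper's slightly awkward ``either we get a contradiction or $S=A_n\setminus\{x_j\}$'' step and gives directly that all maximal dissociated subsets have the same cardinality. Two details you handle that deserve mention: the observation $0\notin D$ (needed so that $d_0\neq 0$), and the degenerate case $n=1$, where $s_1=x_1$ makes $B_n\cup\{s_n\}$ a singleton and hence dissociated; the paper's proof silently assumes $n\geq 2$ there, although the statement still holds for $n=1$.
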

Here $2\cdot D$ denotes the set $\{2x:x\in D\}\subset \{0,2\}^n$. 
\begin{proof}
To begin with, we claim that a 1-spanning subset $S\subset A_n$ must have at least $n+1$ elements. To show this, we distinguish two cases.

Case 1: $S$ does not contain $s_n$. Then, in order to be 1-spanning, $S$ must contain all other elements of $A_n$. Indeed, firstly, an element $x_i\in B_n$ must lie in $S$, for otherwise it cannot be in the 1-span of $S$, since every element of $A_n\setminus\{s_n, x_i\}$, modulo 2, has a zero $x_i$-component. An element of $2\cdot D$ must also lie in $S$, for it cannot be in the 1-span of other elements of $2\cdot D$ (since $D$ is dissociated), nor can it lie in $2\cdot D +\varepsilon_1 x_1+\dots+\varepsilon_n x_n$ with $\varepsilon_i\in [-1,1]$ not all zero, as it is congruent to 0 modulo 2. We have thus shown that $S$ must indeed contain $A_n\setminus \{s_n\}$, so our claim holds in this case, i.e. $|S|\geq n+1$.

Case 2: $S$ contains $s_n$, and does not contain some $x_j$. (If it contained $s_n$ and every $x_j$, then our claim would hold already.) In this case, in order to 1-span $x_j$ using $s_n$, the set $S$ must contain every $x_i$ with $i\neq j$. Moreover, $S$ must then also contain every element of $2\cdot D$. Indeed, an element of $2\cdot D$ equals either $2x_j$ or some combination  $y$ involving some $2x_i$ with $i\neq j$. Now $2x_j$ must lie in $S$ in order to be 1-spanned by $S$, since $S$ does not contain $x_j$ and $D$ is dissociated. We claim that $S$ must also contain every other $y\in 2\cdot D$. Indeed, suppose that $y$ were not in $S$, and suppose that we had a $[-1,1]$-combination of elements of $S$ equal to $y$. This combination would then have to involve $s_n$, because otherwise it could only involve elements of $2\cdot D$ different from $y$, contradicting that $2\cdot D$ is dissociated. By involving $s_n$, this combination involves $x_j$. But the latter can then be neither cancelled nor increased to $2x_j$, since $S$ misses $x_j$, whence this combination could not equal $y$, a contradiction. We conclude that $S$ must be $A_n\setminus \{x_j\}$, so we have $|S|=n+|D|\geq n+1$ in this case.

The set $S_n:=B_n \cup \{s_n\}$, of size $n+1$, is 1-spanning for $A_n$ (and is not dissociated). We have thus shown that $d_s(A_n)= n+1$.

Now suppose that $S$ is a maximal dissociated subset of $A_n$. Then $S$ cannot contain $S_n$, so there exists some element $s\in S_n\setminus (S\cap S_n)$. Note also that, being maximal dissociated, $S$ must be 1-spanning for $A_n$. We can then distinguish the same two cases as above.

In the first case, we have $s=s_n$. Then, as in case 1 above, we must have $S= A_n\setminus \{s_n\}$, which is dissociated (as can be seen using that $B_n$ and $2\cdot D$ both are), clearly maximal, and of size $n+|D|$.

In the second case, we have $s = x_j$ for some $j\in [n]$. Then, $S$ must contain $s_n$ (it cannot 1-span it otherwise) and so we are in case 2 above, in which $S$ must be $A_n\setminus \{x_j\}$. Thus in this second case, either we get a contradiction (if $A_n\setminus \{x_j\}$ is not dissociated), or $S=A_n\setminus \{x_j\}$ is a maximal dissociated set of size $n+|D|$.
\end{proof}

We now combine Proposition \ref{lem:geneg} with \cite[Theorem 1]{LY}.
\begin{proof}[Proof of Theorem \ref{thm:midratio}]
As mentioned in the introduction, there exists a dissociated  set $D_n\subset \{0,1\}^n$ of cardinality $|D_n|= n\log_4 n\, (1+o(1))$ as $n\to \infty$. Applying Proposition  \ref{lem:geneg} with this set $D_n$, we obtain a set $A_n\subset \{0,1,2\}^n$ satisfying $d_s(A_n)=n+1$ and $d_d^-(A_n)=d_d(A_n)= n \log_4 n\,(1+o(1)_{n\to \infty})$, whence \eqref{eq:midratio} follows.
\end{proof}

\section{Focusing on some sets of integers: Theorem \ref{thm:[N]}}\label{section:[N]}

So far, the examples that we have discussed of additive sets with small dimension-ratios have all been given by subsets of $\Z^n$ for large $n$. Note that by applying an appropriate Freiman isomorphism of sufficiently high order to such a set, we can obtain a subset of $\Z$ satisfying the same dimensional properties. For example, if for each $n$ we choose a Freiman isomorphism $\phi_n:\{0,1,2\}^n\to \Z$ of order $n^2$ (say) and satisfying\footnote{The existence of such Freiman isomorphisms is a standard result; see for instance \cite[Lemma 5.25]{T-V}.} $\phi_n(0)=0$, then applying $\phi_n$ to the set $A_n$ from Theorem \ref{thm:midratio} for each $n$ we obtain a family of sets $\phi_n(A_n)\subset \Z$ satisfying \eqref{eq:midratio}. One may wonder whether for some natural families of subsets of $\Z$ the dimensions $d_s^-, d_s,d_d^-,d_d$ lie closer to each other. In this section we show that this is the case for the family of intervals $[N]$, in the sense of Theorem \ref{thm:[N]}; thus we have $d_s([N])=d_d^-([N])$ for any positive integer $N$.

To prove Theorem \ref{thm:[N]}, we shall construct a maximal dissociated subset of $[N]$ of size $d_s([N])$, using the following simple fact concerning the powers of 3.
\begin{lemma}\label{lem:P3}
The set $P_3(k)=\{1,3,\ldots, 3^{k-1}\}$ satisfies $\langle P_3(k)\rangle = \left[-\frac{3^k-1}{2},\frac{3^k-1}{2}\right]$.
\end{lemma}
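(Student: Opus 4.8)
The plan is to prove Lemma \ref{lem:P3} by a clean induction on $k$, which captures the standard ``balanced ternary'' representation. First I would dispose of the base case $k=1$: here $P_3(1)=\{1\}$, so $\langle P_3(1)\rangle=\{-1,0,1\}=\left[-\tfrac{3-1}{2},\tfrac{3-1}{2}\right]$, matching the claim. For the inductive step, I would assume the result holds for $k-1$, so that the set $P_3(k-1)=\{1,3,\ldots,3^{k-2}\}$ satisfies $\langle P_3(k-1)\rangle=\left[-\tfrac{3^{k-1}-1}{2},\tfrac{3^{k-1}-1}{2}\right]$, an interval of consecutive integers centred at $0$.

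The key step is then to observe that every $[-1,1]$-combination of $P_3(k)$ splits as $\varepsilon\, 3^{k-1}+y$, where $\varepsilon\in\{-1,0,1\}$ is the coefficient of the largest element $3^{k-1}$ and $y\in\langle P_3(k-1)\rangle=\left[-\tfrac{3^{k-1}-1}{2},\tfrac{3^{k-1}-1}{2}\right]$. Writing $M=\tfrac{3^{k-1}-1}{2}$, the three choices of $\varepsilon$ yield the three integer intervals $[-M,M]$, $[3^{k-1}-M,\,3^{k-1}+M]$ and $[-3^{k-1}-M,\,-3^{k-1}+M]$. The crucial arithmetic fact is that these three intervals tile the target without gaps or overlaps: since $3^{k-1}=2M+1$, the right endpoint $M$ of the middle block is immediately followed by the left endpoint $3^{k-1}-M=M+1$ of the top block, and symmetrically on the negative side. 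Hence their union is exactly the interval of integers from $-3^{k-1}-M=-\tfrac{3^{k}-1}{2}$ up to $3^{k-1}+M=\tfrac{3^{k}-1}{2}$, which is precisely $\left[-\tfrac{3^k-1}{2},\tfrac{3^k-1}{2}\right]$. This shows $\langle P_3(k)\rangle$ equals the claimed interval and completes the induction.

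I would also note the two easy containments separately for rigour: the inclusion $\langle P_3(k)\rangle\subseteq\left[-\tfrac{3^k-1}{2},\tfrac{3^k-1}{2}\right]$ follows at once from the triangle inequality, since $\sum_{i=0}^{k-1}3^i=\tfrac{3^k-1}{2}$ bounds the absolute value of any $[-1,1]$-combination; the reverse inclusion is what the tiling argument above delivers. There is no serious obstacle here—the only thing to be careful about is the boundary bookkeeping, namely verifying that consecutive blocks abut exactly (using $3^{k-1}=2M+1$) rather than leaving a gap or overlapping, so that the three length-$(2M+1)$ blocks cover all $3^k$ integers in the target interval. Everything else is a routine computation with the geometric sum $\sum_{i=0}^{k-1}3^i=\tfrac{3^k-1}{2}$.
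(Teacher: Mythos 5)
Your proof is correct and follows essentially the same route as the paper: an induction on $k$ in which $\langle P_3(k)\rangle$ is decomposed as $\{-3^{k-1},0,3^{k-1}\}+\langle P_3(k-1)\rangle$, with the three translates of the interval $\left[-\frac{3^{k-1}-1}{2},\frac{3^{k-1}-1}{2}\right]$ abutting exactly to form $\left[-\frac{3^k-1}{2},\frac{3^k-1}{2}\right]$. Your write-up merely makes the tiling bookkeeping (via $3^{k-1}=2M+1$) more explicit than the paper's one-line sumset computation.
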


\begin{proof}
The claim holds for $k=1$. For $k>1$, we may suppose by induction that the claim holds for $k-1$, thus $\langle P_3(k-1)\rangle \supset \left[-\frac{3^{k-1}-1}{2},\frac{3^{k-1}-1}{2}\right]$. Then we have
\begin{eqnarray*}
\langle P_3(k)\rangle & = & \{-3^{k-1},0,3^{k-1}\} + \langle P_3(k-1)\rangle  =   \{-3^{k-1},0,3^{k-1}\} + \left[\frac{-3^{k-1}+1}{2},\frac{3^{k-1}-1}{2}\right] \\
& = &  \left[-\frac{3^k-1}{2},\frac{3^k-1}{2}\right] .
\end{eqnarray*}
\end{proof}
We shall also use the following.
\begin{lemma}\label{lem:dissospan}
Let $A$ be an additive set and let $S\subset A$ be  dissociated and satisfy $\langle S \rangle \supset A$. Then $S$ is maximal dissociated.
\end{lemma}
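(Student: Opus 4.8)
The plan is to show that $S$ being dissociated together with $\langle S\rangle\supset A$ forces $S$ to be a \emph{maximal} dissociated subset of $A$, meaning no dissociated $S'\subset A$ properly contains $S$. First I would argue by contradiction: suppose $S$ is not maximal, so there exists $a\in A\setminus S$ such that $S\cup\{a\}$ is still dissociated. The key observation is that the hypothesis $\langle S\rangle\supset A$ gives us $a\in\langle S\rangle$, i.e. $a$ is a $[-1,1]$-combination $a=\sum_{s\in S}\varepsilon_s\,s$ with coefficients $\varepsilon_s\in\{-1,0,1\}$. Rearranging, this yields a $[-1,1]$-combination of the set $S\cup\{a\}$ equal to $0$, namely $(-1)\cdot a+\sum_{s\in S}\varepsilon_s\,s=0$, in which the coefficient of $a$ is $-1\neq 0$.

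The heart of the argument is then that this non-trivial vanishing combination contradicts the dissociativity of $S\cup\{a\}$, since by definition a dissociated set admits only the all-zero $[-1,1]$-combination equal to $0$. The only subtlety to address is that $a\notin S$, so that $a$ is genuinely a new element of $S\cup\{a\}$ and the coefficient $-1$ it receives is not being conflated with a coefficient on some element of $S$; this is guaranteed because we chose $a\in A\setminus S$. With that in place, the combination above is a non-trivial $[-1,1]$-relation, contradicting dissociativity.

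I do not anticipate any real obstacle here: the statement is essentially the additive-combinatorics analogue of the linear-algebra fact that a spanning linearly independent set is a basis (hence maximal among independent sets), and the proof is a direct unwinding of the definitions of $\langle S\rangle$ and of dissociativity given in the excerpt. The one point worth stating carefully is the equivalence, recorded in the first Definition of the excerpt, between a set being dissociated and the only $[-1,1]$-combination equalling $0$ being the trivial one; invoking that reformulation is what makes the contradiction immediate.
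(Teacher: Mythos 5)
Your proof is correct and takes essentially the same approach as the paper: the paper's one-line argument is exactly the contrapositive of your reasoning, namely that if $S\cup\{a\}$ were dissociated then $a$ could not lie in $\langle S\rangle$. You merely spell out the step the paper leaves implicit, that writing $a=\sum_{s\in S}\varepsilon_s\,s$ and subtracting yields a non-trivial vanishing $[-1,1]$-combination of $S\cup\{a\}$.
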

\begin{proof}
If there existed $a\in A\setminus S$ such that $S\cup \{a\}$ is dissociated, then $a$ could not lie in the 1-span of $S$, contradicting that $\langle S \rangle \supset A$.
\end{proof}

To establish Theorem \ref{thm:[N]} we distinguish two cases, according to whether the fractional part $\{ \log_3 N \}:= \log_3 N - \lfloor \log_3 N \rfloor$ satisfies $\{ \log_3 N \}<1-\log_3 2$ or $\{ \log_3 N \}> 1-\log_3 2$.

\begin{proposition}\label{prop:case1}
Let $N$ be a positive integer. The following statements are equivalent.
\begin{enumerate}[leftmargin=30pt]
\item We have $\{ \log_3 N \} <1-\log_3 2$.

\item The set $S_1:=\{1,3,3^2,\ldots, 3^{\lfloor \log_3 N\rfloor}\}$ is a minimum 1-spanning maximal dissociated subset of $[N]$. In particular $d_s([N])= d_d^-([N])= \lfloor\log_3 N\rfloor +1$.
\end{enumerate}
\end{proposition}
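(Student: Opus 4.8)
The plan is to reduce the whole statement to a single arithmetic inequality. Write $k=\floor{\log_3 N}$, so that $S_1=P_3(k+1)$ in the notation of Lemma~\ref{lem:P3}. First I would translate condition (i) into an arithmetic inequality: $\{\log_3 N\}<1-\log_3 2$ is equivalent to $N<3^{k+1}/2$, hence (as $2N$ is even while $3^{k+1}$ is odd) to $N\le(3^{k+1}-1)/2$. By Lemma~\ref{lem:P3} we have $\langle S_1\rangle=[-(3^{k+1}-1)/2,(3^{k+1}-1)/2]$, and $S_1\subset[N]$ because $3^k\le N$. Therefore condition (i) holds if and only if $\langle S_1\rangle\supset[N]$, that is, if and only if $S_1$ is a 1-spanning set for $[N]$. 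This already yields the implication (ii)$\Rightarrow$(i), since statement (ii) asserts in particular that $S_1$ is 1-spanning.

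For the converse (i)$\Rightarrow$(ii) I would first record that $S_1$ is dissociated, independently of $N$: by Lemma~\ref{lem:P3} the span $\langle S_1\rangle$ consists of exactly $3^{k+1}$ integers, which is precisely the number of $[-1,1]$-combinations of the $k+1$ elements of $S_1$; hence these combinations take pairwise distinct values, and in particular the only one equal to $0$ is the trivial one. Assuming (i), the set $S_1$ is thus dissociated, contained in $[N]$, and 1-spanning for $[N]$, so Lemma~\ref{lem:dissospan} shows it is maximal dissociated. Consequently $d_d^-([N])\le|S_1|=k+1$ and $d_s([N])\le k+1$.

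It then remains to prove the matching lower bound $d_s([N])\ge k+1$. For any $S$ with $\langle S\rangle\supset[N]$, the 1-span $\langle S\rangle$ is symmetric about $0$ and contains $0$, so it contains the whole block $\{-N,\ldots,-1,0,1,\ldots,N\}$; comparing cardinalities gives $3^{|S|}\ge\abs{\langle S\rangle}\ge 2N+1>3^k$ (using $N\ge 3^k$), whence $|S|\ge k+1$. This yields $d_s([N])=k+1$, so $S_1$ is a minimum 1-spanning subset. Finally, combining the general inequality $d_s([N])\le d_d^-([N])$ from the introduction with the bound $d_d^-([N])\le k+1$ already obtained gives $d_d^-([N])=k+1$, completing (ii) and hence the equivalence.

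The only genuinely delicate point is the lower bound just described, and I expect it to be the one step requiring care: the naive estimate $N\le\abs{\langle S\rangle}\le 3^{|S|}$ only yields $|S|\ge\lceil\log_3 N\rceil$, which equals $k$ rather than $k+1$ when $N$ is an exact power of $3$. Exploiting the symmetry of $\langle S\rangle$ to count $2N+1$ forced elements is exactly what repairs this boundary case; everything else is bookkeeping built on the two lemmas and the arithmetic reformulation of (i).
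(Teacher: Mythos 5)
Your proof is correct and follows essentially the same route as the paper's: Lemma \ref{lem:P3} identifies $\langle S_1\rangle$, the spanning condition becomes the arithmetic inequality $N\le(3^{k+1}-1)/2$ equivalent to (i), maximality comes from Lemma \ref{lem:dissospan}, and minimality comes from the same $2N+1\le 3^{|S|}$ count (the paper phrases it as $N\le(3^{|S|}-1)/2$ via positive-valued combinations, you via symmetry of the span, which is the identical estimate). Your explicit cardinality argument for the dissociativity of $S_1$ and the closing step $d_s([N])\le d_d^-([N])$ are just slightly more detailed versions of what the paper leaves implicit.
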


\begin{proof}
It follows from Lemma \ref{lem:P3} that
\[
\langle S_1\rangle = \langle P_3(\lfloor \log_3 N\rfloor+1) \rangle = \left[-\frac{3^{\lfloor \log_3 N\rfloor+1}-1}{2},\frac{3^{\lfloor \log_3 N\rfloor+1}-1}{2}\right].
\]
Therefore $S_1$ is a 1-spanning subset of $[N]$ if and only if $\frac{3^{\lfloor \log_3 N\rfloor+1}-1}{2}\geq N$, that is if and only if $\{ \log_3 N \} <1-\log_3 2$. In particular, $(ii)$ implies $(i)$.

Now if $(i)$ holds, then we claim that $S_1$ is in fact  \emph{minimum} 1-spanning for $[N]$. Indeed, any 1-spanning subset $S$ of $[N]$ must satisfy $N \leq (3^{|S|}-1)/2$, since to cover $[N]$ with $[-1,1]$-combinations of $S$ we only use the combinations with positive value. Hence $|S| \geq \log_3 (2N+1) >\log_3 N \geq \lfloor \log_3 N\rfloor$, so we have indeed that $|S| \geq  \lfloor \log_3 N\rfloor +1=|S_1|$. Finally, note that $S_1$ is dissociated, so by Lemma \ref{lem:dissospan} it is maximal dissociated in $[N]$. We have thus shown that $(ii)$ holds.
\end{proof}

We now treat the second case.

\begin{proposition}\label{prop:case2}
Let $N$ be a positive integer, and let $t=1+ \sum_{i=0}^{  \lfloor \log_3 N\rfloor }3^i= \frac{3^{\lfloor \log_3 N \rfloor+1}+1}{2}$. The following statements are equivalent.
\begin{enumerate}[leftmargin=30pt]
\item We have $\{ \log_3 N \} > 1-\log_3 2$.

\item The set $S_2:=\{1,3,3^2,\ldots, 3^{\lfloor \log_3 N\rfloor}\}\cup \{t\}$ is a minimum 1-spanning maximal dissociated subset of $[N]$. In particular $d_s([N])= d_d^-([N])= \lfloor\log_3 N\rfloor +2$.
\end{enumerate}
\end{proposition}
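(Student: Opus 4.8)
The plan is to prove the two implications $(i)\Rightarrow(ii)$ and $(ii)\Rightarrow(i)$ separately, following closely the template of the proof of Proposition~\ref{prop:case1}. Throughout I would write $k=\lfloor \log_3 N\rfloor$, so that $3^k\le N<3^{k+1}$, and set $M=\frac{3^{k+1}-1}{2}$, so that $t=M+1$ and $2M+1=3^{k+1}$. Observe also that $S_1=P_3(k+1)$ in the notation of Lemma~\ref{lem:P3}. A preliminary reduction records that $(i)$ is \emph{equivalent} to the inequality $t\le N$: the condition $\{\log_3 N\}>1-\log_3 2$ rearranges to $2N>3^{k+1}$, and since $3^{k+1}$ is odd this is the same as $N\ge \frac{3^{k+1}+1}{2}=t$. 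In particular, under $(i)$ we have $t\in[N]$, so $S_2\subset[N]$ is a legitimate subset.

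For $(i)\Rightarrow(ii)$ I would first compute the $1$-span of $S_2$. By Lemma~\ref{lem:P3} we have $\langle S_1\rangle=[-M,M]$, and since $\langle S_2\rangle=\{-t,0,t\}+\langle S_1\rangle$ with $t=M+1$, the three translated copies $[-2M-1,-1]$, $[-M,M]$, $[1,2M+1]$ overlap (as $M\ge 1$ for $k\ge 0$) and their union is $[-2M-1,2M+1]=[-3^{k+1},3^{k+1}]$. Because $N<3^{k+1}$, this interval contains $[N]$, so $S_2$ is $1$-spanning. Dissociativity of $S_2$ then follows from that of $S_1$: in any vanishing $[-1,1]$-combination $\sum_{i=0}^{k}\varepsilon_i 3^i+\varepsilon\,t=0$ one must have $\varepsilon=0$, since otherwise $t=|\varepsilon t|=\big|\sum_i\varepsilon_i 3^i\big|\le M<t$, a contradiction; and with $\varepsilon=0$ the dissociativity of $S_1$ forces all $\varepsilon_i=0$. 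Hence, by Lemma~\ref{lem:dissospan}, $S_2$ is maximal dissociated in $[N]$.

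The remaining point, where the case hypothesis does its work, is that $S_2$ is \emph{minimum} $1$-spanning. Exactly as in Proposition~\ref{prop:case1}, any $1$-spanning set $S$ for $[N]$ satisfies $2N+1\le|\langle S\rangle|\le 3^{|S|}$ (using that $\langle S\rangle$ is symmetric about $0$ and contains $[N]$), so $|S|\ge \log_3(2N+1)$. Hypothesis $(i)$ gives $2N+1>3^{k+1}$, whence $\log_3(2N+1)>k+1$, and, $|S|$ being an integer, $|S|\ge k+2=|S_2|$. Therefore $d_s([N])=|S_2|=k+2$. Combining this with the inequality $d_s\le d_d^-$ noted in the introduction and with the fact that $S_2$ is a maximal dissociated set of size $k+2$ (so $d_d^-([N])\le k+2$), we obtain $d_s([N])=d_d^-([N])=k+2=\lfloor\log_3 N\rfloor+2$, which is $(ii)$.

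Finally, $(ii)\Rightarrow(i)$ is immediate: if $S_2$ is a $1$-spanning subset of $[N]$ then in particular $t\in S_2\subset[N]$, i.e.\ $t\le N$, which by the preliminary reduction is equivalent to $(i)$. I expect no serious obstacle here; the only step requiring genuine care is the $1$-span computation, where one must verify that the shifted intervals truly overlap so that no gap appears between $M$ and $t=M+1$ (the choice $t=M+1$ being precisely the threshold making $t$ both too large to lie in $\langle S_1\rangle$ and small enough to keep $\langle S_2\rangle$ an interval). Everything else is a direct transcription of the argument in Proposition~\ref{prop:case1}.
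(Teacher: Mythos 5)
Your proof is correct and takes essentially the same route as the paper's: compute $\langle S_2\rangle=[-3^{\lfloor\log_3 N\rfloor+1},3^{\lfloor\log_3 N\rfloor+1}]$ via Lemma \ref{lem:P3}, derive minimality from the counting bound $|S|\geq\log_3(2N+1)$ exactly as in Proposition \ref{prop:case1}, get maximality from Lemma \ref{lem:dissospan}, and obtain $(ii)\Rightarrow(i)$ from the requirement $t\leq N$. The only difference is that you spell out details the paper leaves implicit (the overlap of the three shifted intervals and the verification that $S_2$ is dissociated), which is fine.
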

\begin{proof}
By Lemma \ref{lem:P3} we have
\[
\langle S_2\rangle = \langle P_3(\lfloor \log_3 N\rfloor+1) \rangle + \{-t,0,t\} = \left[-3^{\lfloor \log_3 N\rfloor+1},3^{\lfloor \log_3 N\rfloor+1}\right].
\]
Thus $S_2$ is a 1-spanning set for $[N]$ which is dissociated. We have $S_2\subset [N]$ if and only if $t\leq N$, i.e.  $\{ \log_3 N \} > 1-\log_3 2$. In particular, $(ii)$ implies $(i)$.

If $(i)$ holds, then we claim that $S_2$ is \emph{minimum} 1-spanning. Indeed, as shown at the end of the previous proof, if $S$ is 1-spanning for $[N]$ then we must have $|S|\geq \log_3(2N+1)$. If $|S|$ were less than $|S_2|$, i.e. if  $|S|\leq \lfloor\log_3 N\rfloor +1$, then we would have $\lfloor \log_3 N \rfloor +1\geq \log_3(2N+1) > \log_3 2+ \log_3 N$, that is $\{ \log_3 N \}< 1-\log_3 2$, which contradicts $(i)$, so we must have $|S|\geq \lfloor\log_3 N\rfloor +2=|S_2|$. Note also that $S_2$ is dissociated, and therefore maximal dissociated in $[N]$ (by Lemma \ref{lem:dissospan} again). We have thus shown that $(ii)$ holds.
\end{proof}

This completes the proof of Theorem \ref{thm:[N]}.

\section{Final remarks}

In \cite{Sch}, Schoen gave an interesting argument, using Chang's theorem, yielding an upper bound for the maximum density of a subset $A$ of $\Zmod{p}$ ($p$ prime) such that the Cartesian power $A^k$ contains no  element $x$ solving a given integer linear equation $L(x)=c_1x_1+\cdots +c_k x_k=0$. We call such a set $A$ an \emph{$L$-free set}.  Schoen's upper bound involves the dimension $d_s^-(C)$, where $C=\{c_1,\ldots,c_k\}$ is the set of  coefficients of $L$ (in \cite{Sch} this dimension is denoted $\ell(C)$). It is a straightforward task to check that in Schoen's argument one can use $d_s(C)$ instead of $d_s^-(C)$. Thus one obtains the following version of Schoen's result.

\begin{theorem}\label{thm:circle-u-b}
Let $L(x)=c_1x_1+\cdots+c_k x_k$ be a linear form with coefficients $c_i \in \Z$, and let $m_L(\Zmod{p})=\max\{ |A|/p: A\subset \Zmod{p},\;A\textrm{ is }L\textrm{-free}\}$. Then
\begin{equation}\label{eq:main-bounds}
m_L(\Zmod{p})\leq e^{- d_s(C)/12},
\end{equation}
where $C=\{c_1,c_2,\ldots,c_k\}$.
\end{theorem}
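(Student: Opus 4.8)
The plan is to follow Schoen's argument from \cite{Sch} unchanged up to its final step, replacing only the appeal to the lower span dimension by an appeal to $d_s$. Recall the Fourier-analytic heart of that argument. Writing $f=\mathbf 1_A$ and $\fhat(\xi)=\sum_{a\in A}e_p(-\xi a)$, the number of solutions of $L(x)=0$ with $x\in A^k$ equals $\tfrac1p\sum_{r\in\Zmod{p}}\prod_{i}\fhat(rc_i)$, whose $r=0$ term is $|A|^k/p$. If $A$ is $L$-free this count vanishes, so $|A|^k\le\sum_{r\neq0}\prod_i|\fhat(rc_i)|$. Schoen's argument extracts from this, for an appropriate threshold $\rho$, a unit $r_0\neq0$ for which $|\fhat(r_0c_i)|\ge\rho|A|$ for every $i$; equivalently, the dilate $r_0\cdot C$ lies in the large spectrum $\mathrm{Spec}_\rho(A)=\{\xi:|\fhat(\xi)|\ge\rho|A|\}$. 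I would reproduce this step verbatim. The only preliminary remark I would add is that dilation by the unit $r_0$ is an automorphism of $\Zmod{p}$ carrying $[-1,1]$-combinations to $[-1,1]$-combinations; it therefore preserves dissociativity and fixes all the span and dissociativity dimensions, so $d_s(r_0\cdot C)=d_s(C)$ and $d_s^-(r_0\cdot C)=d_s^-(C)$.

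The second ingredient is Chang's theorem, whose content is a bound on the dissociativity of the spectrum: any dissociated subset of $\mathrm{Spec}_\rho(A)$ has cardinality at most $c\,\rho^{-2}\log(1/\alpha)$, where $\alpha=|A|/p$ and $c$ is absolute. In Schoen's formulation one passes through the induced $1$-spanning set: Chang produces a set $\Lambda\subseteq\Zmod{p}$ with $\langle\Lambda\rangle\supseteq\mathrm{Spec}_\rho(A)\supseteq r_0\cdot C$ and $|\Lambda|\le c\,\rho^{-2}\log(1/\alpha)$, and because $\Lambda$ is an arbitrary subset of the ambient group one reads off only the lower variant $d_s^-(C)=d_s^-(r_0\cdot C)\le|\Lambda|$. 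The improvement, which is the whole point of the theorem, is to avoid this external $\Lambda$ and work inside $r_0\cdot C$. I would let $D$ be a maximal dissociated subset of $r_0\cdot C$. On one hand, $D$ is a dissociated subset of $\mathrm{Spec}_\rho(A)$, so the dissociated-set bound underlying Chang's theorem gives $|D|\le c\,\rho^{-2}\log(1/\alpha)$. On the other hand, a maximal dissociated subset $1$-spans its ambient set (as noted in the introduction), so $\langle D\rangle\supseteq r_0\cdot C$ with $D\subseteq r_0\cdot C$, which by definition gives $d_s(r_0\cdot C)\le|D|$. Combining the two estimates and invoking dilation invariance yields $d_s(C)\le c\,\rho^{-2}\log(1/\alpha)$, that is $\alpha\le\exp\!\big(-d_s(C)/(c\rho^{-2})\big)$, and with the threshold $\rho$ from the extraction step these constants combine to the exponent $1/12$, which is \eqref{eq:main-bounds}.

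The substitution $d_s^-\rightsquigarrow d_s$ itself is immediate once the objects are arranged as above, so I expect no new idea to be required; the real work lies entirely in the two imported results. The main obstacle I anticipate is bookkeeping of constants: one must reproduce Schoen's extraction step with a threshold $\rho$ that does not degrade with $k$ (so that the final exponent depends only on $d_s(C)$ and the density), and then track the constant $c$ in the dissociated-set form of Chang's theorem together with this $\rho$ so as to land on exactly the value $12$. I regard this constant-chasing, rather than the structural improvement, as the only delicate point, in keeping with the remark of \cite{Sch} that the modification is straightforward.
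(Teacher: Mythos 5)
Your proposal is correct and is precisely the ``straightforward check'' the paper alludes to: the paper offers no detailed proof of Theorem \ref{thm:circle-u-b}, deferring entirely to Schoen's argument, and your single structural modification --- replacing Chang's external spanning set $\Lambda$ by a maximal dissociated subset $D$ of the dilate $r_0\cdot C$, which lies in $\mathrm{Spec}_\rho(A)$ (so the dissociated-set form of Chang's theorem bounds $|D|$) and $1$-spans $r_0\cdot C$ from within (so it bounds $d_s$ rather than merely $d_s^-$) --- is exactly the intended substitution. Your observations on dilation invariance and on the need to track Schoen's threshold $\rho$ and Chang's constant to land on the exponent $1/12$ are also apt, so there is no divergence from the paper's route.
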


As recalled in the introduction, there exists a dissociated set $D\subset \{0,1\}^n$ of size $\sim n\log_4 n$, and this has dimension $d_s(D)=|D|$, which is roughly $\log_4 n$ times $d_s^-(D)=n$. Applying an appropriate Freiman isomorphism $\phi: \{0,1\}^n \to \Z$, as in the previous section, we obtain a set $C=\phi(D) \subset \Z$ with the same properties (note that $d_s^-(C)\leq d_s^-(D)$ and $d_s(C)=d_s(D)$). For a linear form $L$ with coefficient-set $C$,  the bound \eqref{eq:main-bounds} is thus stronger than the version with $d_s^-(C)$. It would be interesting to strengthen the upper bound on $m_L(\Zmod{p})$ further.

\textbf{Acknowledgements.} The first author is grateful to Jakob Vidmar for programming computer searches that shed light on problems treated in this paper. The authors are also very grateful to Vsevolod Lev for bringing to their attention the results on dissociated subsets of $\{0,1\}^n$ in \cite{Bs,C&M,Lind}.

\end{document}